\newtheorem{theorem}{Theorem}[section]
\newtheorem{lemma}[theorem]{Lemma}
\newtheorem{corollary}[theorem]{Corollary}
\theoremstyle{definition}
\newtheorem{definition}[theorem]{Definition}
\newtheorem{proposition}[theorem]{Proposition}
\theoremstyle{remark}
\newtheorem{remark}[theorem]{Remark}
\numberwithin{equation}{section}
\begin{document}

\title{ Brown-York mass and positive scalar curvature II\\
- Besse's conjecture and related problems}


\author{Yi Fang}
\address{(Yi Fang) Department of Mathematics, Anhui University of Technology, Ma'anshan, Anhui 243002, China}
\email{flxy85@163.com}

\author{Wei Yuan}
\address{(Wei Yuan) Department of Mathematics, Sun Yat-sen University, Guangzhou, Guangdong 510275, China}
\email{gnr-x@163.com}




\keywords{Besse's conjecture, Brown-York mass, positive mass theorem, scalar curvature, $V$-static metric}

\thanks{Yi Fang was supported by \emph{The Young Teachers' Science Research Funds of Anhui University of Technology} (Grant No. RD16100248 ). Wei Yuan was supported by NSFC (Grant No. 11601531, No. 11521101) and \emph{The Fundamental Research Funds for the Central Universities} (Grant No. 2016-34000-31610258).}

\begin{abstract}
The Besse's conjecture was posted on the well-known book \emph{Einstein manifolds} by Arthur L. Besse, which describes the critical point of Hilbert-Einstein functional with constraint of unit volume and constant scalar curvature. In this article, we show that there is an interesting connection between Besse's conjecture and positive mass theorem for Brown-York mass. With the aid of positive mass theorem, we investigate the geometric structure of the so-called CPE manifolds, which provides us further understanding of Besse's conjecture. As a related topic, we also have a discussion of corresponding results for $V$-static metrics. 
\end{abstract}

\maketitle



\section{Introduction}

In Riemannian geometry, scalar curvature is one of the most mysterious object. It is defined in such a simple way but surprises one with its great geometric and topological influences on manifolds. \\

As one type of the curvature closed related to Ricci curvature, scalar curvature appears in the study of Einstein metrics frequently. One can easily find many such topics on the classic book \emph{Einstein manifolds} by Arthur L. Besse. Besides researches on Einstein manifolds, it also includes many open problems about Einstein metrics. For example, it proposed an open question as follow in Remark 4.48 (page 128 in \cite{Besse}):\\

\emph{Suppose $M^n$ is an $n$-dimensional compact manifold without boundary. Let
$$
\mathcal{C}=\{g:\text{Vol}_{M^n}(g) =1 \ \text{and} \ R(g)=\text{constant}\}
$$
be a collection of Riemannian metrics on $M$. Consider the Hilbert-Einstein functional
\begin{equation*}
\mathcal{F}(g)=\int_{M}R_g dv_g
\end{equation*}
with constraint $g\in \mathcal{C}$. It was conjectured that critical points in this case have to be Einstein.}\\

It was pointed out on exactly the same page in Proposition 4.47 in \cite{Besse}, this conjecture holds if $R_g/(n - 1) \notin Spec( - \Delta_g)$. This leaves the case open only when $R_g/(n - 1)$ is an eigenvalue of Laplacian.\\

On the other hand, even if $R_g/(n - 1) \in Spec(-\Delta_g)$, the Euler-Lagrange equation can still be written as
\begin{equation}\label{eqn:CPE}
\gamma_g^* u :=\nabla^2 u -g\Delta_g u- u Ric_g = E_g,
\end{equation}
where $\gamma_g^*$ is the $L^2$-formal adjoint of the linearization of scalar curvature and $u$ is a smooth function with zero mean and $E_g := Ric_g - \frac{R_g}{n} g$ is the traceless part of Ricci curvature tensor. Equation (\ref{eqn:CPE}) is usually referred to be \emph{critical point equation (CPE)} in the study of Besse's conjecture. Easy to see that the conjecture would be confirmed if the function $u$ vanishes identically. However, the existence of non-trivial functions solving equation (\ref{eqn:CPE}) would make the situation mysterious. In fact, if the conjecture still holds in this case, the manifold has to be isometric to the canonical sphere $\mathbb{S}^n$ due to the well-known \emph{Obata's theorem}, if we normalized the scalar curvature to be $n(n - 1)$. Thus the Besse's conjecture is also referred to be the following one:\\

\newtheorem*{conj_A}{\bf Conjecture A}
\begin{conj_A}
Suppose $(M^n,g)$ is a closed Riemannian manifold with constant scalar curvature
$R_g =n(n-1)$.
Assume there is a non-zero smooth function $u$ solves the equation
\begin{equation*}
 \tag{CPE}  \nabla^2 u -g\Delta_g u - u Ric_g = E_g,
\end{equation*}
then $(M^n,g)$ is isometric to the round sphere $\mathbb{S}^n$ with its canonical metric.
\end{conj_A}
For simplicity, we say a metric $g$ is a \emph{CPE metric}, if it satisfies CPE equation for some smooth function $u$. And we also use a triple $(M,g,u)$ stands for this CPE manifold.\\

Even before the publication of Besse's book, partial results were known for this conjecture. For example, Lafontaine showed this conjecture is true, if the CPE metric is also conformally flat (see \cite{Lafontaine}). Among all those partial results about Besse's conjecture, Hwang and his collaborators did a series of studies on this special topic over years (see \cite{C-H,C-H-Y_1,C-H-Y_2,C-H-Y_3,Hwang_1,Hwang_2,Hwang_3}). Their researches provide us many fundamental interpretations on the geometric structures of CPE metrics and related geometry. Hwang first observed in \cite{Hwang_1} that the level set
$$\Sigma:=\{x \in M : u(x)=-1 \}$$
plays an essential role in studying the geometry of CPE metrics (for this reason, we will refer $\Sigma$ to be the \emph{critical level set}). Moreover, from a very delicate analysis on the geometric structure of $\Sigma$ and the assistance of some geometric equalities, Chang-Hwang-Yun verified Besse's conjecture with the additional assumption that $g$ is of harmonic curvature (see \cite{C-H-Y_3}). This result successfully generalizes Lafontaine's work and made a large progress in solving Besse's conjecture.\\

By noticing the analogue between CPE metrics and \emph{vacuum static metrics}, which satisfy the equation
\begin{equation}\label{eqn:vacuum_static}
\gamma_g^*u =\nabla^2 u - g\Delta u - u Ric_g = 0
\end{equation}
for some non-trivial function $u$, Qing and Yuan showed a CPE metric has to be spherical, if it is Bach flat. (In fact, this result can be improved for three dimensional manifolds. For more details, please see \cite{Qing-Yuan}; also see \cite{Barros-Ribeiro} for a four dimensional result). \\

The main purpose of this article is to introduce a new perspective of studying Besse's conjecture. Specifically speaking, we find an interesting connection between Besse's conjecture and the \emph{positive mass theorem for Brown-York mass} due to Shi-Tam (\cite{Shi-Tam}), which is equivalent to the well-known \emph{positive mass theorem for ADM mass} by Schoen-Yau and Witten (\cite{Schoen-Yau_1, Schoen-Yau_2, Schoen-Yau_3, Witten}). This idea did not appear in the previous research on Besse's conjecture according to the best of the authors' knowledge, although it seems natural since both of them origins from the Hilbert-Einstein functional. \\

The main tools we used in this article is the positive mass theorem for Brown-York mass (\cite{Shi-Tam}) and a conformal transformation trick first showed up in Bunting and Masood-ul-Alam's well-known work of the uniqueness of static black-holes (\cite{Bunting-Masood}). The same method can be applied to the study of first eigenvalue of Laplacian for manifolds with boundary and positive scalar curvature, please refer to the first article in the sequence (\cite{Yuan_2}). Similar approaches were also used in other works such as \cite{H-M-R, Qing}. \\

In order to derive higher dimensional results, we need to use the corresponding version of Shi-Tam's positive mass theorem. It is equivalent to the positive mass theorem for ADM mass in higher dimensions, which was claimed by Lohkamp and Schoen-Yau independently (see \cite{Lohkamp_1, Lohkamp_2, Schoen-Yau_3}). \\

The main result of this article is the following:

\newtheorem*{thm_A}{\bf Theorem A}
\begin{thm_A}
	Let $(M^n,g, u)$ be a CPE manifold with scalar curvature $R_g =n(n-1)$. Suppose its critical level set $\Sigma=\bigcup^m_{i=1}\Sigma_i$ and each connected component $(\Sigma_i, g|_{\Sigma_i})$ can be isometrically embedded into $\mathbb{R}^n$ as a convex hypersurface with mean curvature $A^i_0$,
	then we have inequalities	
	\begin{align}
	Area (\Sigma_i, g)\leq \frac{ (\alpha + 1)^2 W_i^2 }{(n-1)(n-2) (W_i^2 + 1 + \alpha)^2 } \int_{\Sigma} \left( R_{\Sigma_i} + |\overset{\circ}{A}{}^i_0|^2 \right) d\sigma_g,
	\end{align}
	where $\overset{\circ}{A}{}^i_0$ is the traceless part of $A^i_0$, $W_i :=|\nabla u|_{\Sigma_i}$,  and
	$$\alpha: = \left( u^2 + |\nabla u|^2 \right)^{\frac{1}{2}}.$$
	Moreover, $(M^n, g)$ is isometric to the round sphere $\mathbb{S}^n$, if one of the equalities holds.
\end{thm_A}

\begin{remark}
	According to the solution of Weyl problem, the isometrical embedding assumptions can be replaced by assumptions on secional curvatures, for instance Gaussian curvature of $\Sigma$ is positive when $n=3$. For the purpose of maintaining the simplicity of assamptions, we state our results in the current way throughout the whole article instead of involving more complicated assumptions.  
\end{remark}
\ \\

Adopting a similar approach of Shen (cf. \cite{Shen}), one can derive some topological informations of $\Sigma$ for three dimensional CPE manifolds without embeddedness assumptions on $\Sigma$. For higher dimensions, this approach can provide improved estimate, if $\Sigma$ is connected: 

\newtheorem*{thm_B}{\bf Theorem B}
\begin{thm_B}\label{thm:CPE_est_sigma_connected}
	Let $(M^n,g, u)$ be an $n$-dimensional CPE manifold with scalar curvature $R_g=n(n-1)$. Suppose its critical level set $\Sigma$ is connected, then
	\begin{align}
	Area(\Sigma, g) 
	\leq& \frac{W^2}{(n-1)(n-2)(1+W^2)} \int_{\Sigma} R_{\Sigma} d\sigma_g,
	\end{align}	
	where $W = |\nabla u|_\Sigma$ and equality holds if and only if $(M^n , g)$ is isometric to the round sphere $\mathbb{S}^n$ with its canonical metric. 
	
	For $n=3$, there is at least one component of $\Sigma$ is homeomorphic to $\mathbb{S}^2$. Furthermore if $\Sigma$ is connected, we have the inequality	
	\begin{equation}
	\text{Area}(\Sigma, g)\leq \frac{4 \pi W^2}{ 1 + W^2},
	\end{equation}
	where $(M^n, g)$ is isometric to the round sphere $\mathbb{S}^3$, if and only if the equality holds.
\end{thm_B}

\begin{remark}\label{rem:B-D-R-R_remark}
	Based on the same idea, Batisa-Di\'{o}genes-Ranier-Ribeiro get similar conclusions for the boundary of three dimensional $V$-static manifolds (see \cite{B-D-R-R}). As we will state below, these two results are in fact equivalent to each other. However, since these results never showed up in the research of Besse's conjecture so far as we know, we believe it is still worth to state it here as a special case of our general results.
\end{remark}
\begin{remark}	
	 So far we could not rule out the possibility that $\Sigma$ might have components more than two. It would be a very interesting question whether we can prove the connectedness of $\Sigma$ without any additional assumption. This would be a great progress in solving Besse's conjecture. 
\end{remark}
\ \\

Besides CPE metrics, there is another type of metrics closely related to the vacuum static metrics, called \emph{$V$-static metrics}. It was first introduced by Miao and Tam to study the critical metrics of volume functional with constraint of constant scalar curvature (\cite{Miao-Tam_1}). To be precise, these metrics are defined to be those satisfies the equation
\begin{equation}\label{eqn:V-static}
\gamma_g^*f =\nabla^2 f - g\Delta f - f Ric_g = \mu g
\end{equation} 
for some smooth function $f$ and $\mu \in \mathbb{R} - \{0\}$. Here we adopt the definition in \cite{C-E-M} instead of the original one in \cite{Miao-Tam_1} (where $\mu$ is taken to be $1$) and explain the reason later. \\

The concept of \emph{V-static metrics} in fact possesses a very strong geometric and variational meaning:
Corvino-Eichmair-Miao discovered the volume-scalar curvature stability of non-$V$-static metrics (\cite{C-E-M}). On the contrary, the second named author obtained a local volume comparison for $V$-static metrics, which showed that the deformation result of Corvino-Eichmair-Miao is in fact sharp (see \cite{Yuan_1}).  \\

Similar to Besse's conjecture, mathematicians are also interested in classifying all possible $V$-static metrics. Towards this goal, Miao and Tam classified such metrics with additional curvature assumptions that either it is Einstein or locally conformally flat (\cite{Miao-Tam_2}). There are many other partial results, please see \cite{B-D-R-R} for more references.\\

Note that if we simply taking $\mu = - \frac{R_g}{n}$ and $f = u + 1$, we obtain the CPE equation (\ref{eqn:CPE}) immediately from $V$-static equation (\ref{eqn:V-static}). Similarly, we can derive $V$-static equation (\ref{eqn:V-static}) from CPE equation (\ref{eqn:CPE}) by choosing $u = - \frac{R_g}{n\mu}f - 1$ for $\mu \neq 0$. (For $\mu = 0$, the volume functional behaves very differently. Please refer to the remark 1.3 in \cite{Yuan_1} for more details.) In this sense, CPE metrics and $V$-static metrics are in fact equivalent to each other. This fact was first pointed out by Corvino-Eichmair-Miao in the beginning of \cite{C-E-M}.\\  

Of course, the usual setting for $V$-static metrics are manifolds with boundary. This is more extensive than the original setting of Besse's conjecture. Besides this, we also would like to include the case when $\mu = 0$ to make the result more complete. Due to these reasons, we include the parameter $\mu$ in our setting. Now we state the $V$-static metric version of Theorem A and note that this is in fact equivalent to Theorem A when $\mu > 0$: 

\newtheorem*{cor_A}{\bf Corollary A}
\begin{cor_A}
	Let $(M^n,g, \mu, f)$ be a $V$-static manifold with $\mu \geq 0$ and scalar curvature $R=n(n-1)$. Suppose its boundary $\Sigma=\bigcup^m_{i=1}\Sigma_i$ and each connected component $(\Sigma_i, g|_{\Sigma_i})$ can be isometrically embedded into $\mathbb{R}^n$ as a convex hypersurface with mean curvature $A^i_0$,
	then we have inequalities	
	\begin{align}
	Area (\Sigma_i, g)\leq \frac{ (n-1)( \mu + (n-1)\alpha)^2 W_i^2 }{(n-2) ( \mu^2 + (n-1) \mu \alpha + (n-1)^2W_i^2)^2 } \int_{\Sigma} \left( R_{\Sigma_i} + |\overset{\circ}{A}{}^i_0|^2 \right) d\sigma_g,
	\end{align}
	where $\overset{\circ}{A}{}^i_0$ is the traceless part of $A^i_0$, $W_i :=|\nabla f|_{\Sigma_i}$,  and
	$$\alpha: = \left( f^2 + |\nabla f|^2 \right)^{\frac{1}{2}}.$$
	Moreover, $(M^n, g)$ is isometric to a geodesic ball in the round sphere $\mathbb{S}^n$, if one of the equalities holds.
\end{cor_A}

\begin{remark}
	When $\mu = 0$, the $V$-static equation reduces to the vacuum static equation as we described before. In this case, $\Sigma$ is usually referred to be an \emph{event horizon} and the corresponding area estimate was obtained by the second named author in \cite{Yuan_2}.
\end{remark}

\begin{remark}
	For $n = 3$ and $\Sigma$ is connected and totally umbilical in $\mathbb{R}^n$, this estimate recovers the one by Batisa-Di\'{o}genes-Ranier-Ribeiro in \cite{B-D-R-R} as we stated in Remark \ref{rem:B-D-R-R_remark}. 
\end{remark}

\paragraph{\textbf{Acknowledgement}}
The authors would like to express their appreciations to Professor Qing Jie for his constantly support and encouragement. We also would like to thank Professor Li Tongzhu for inspiring discussions back in University of California, Santa Cruz.

\ \\

\section{A sharp inequality for $\mathscr{L}_n$-extension}

Let $(M^n, g)$, $n\geq3$, be an $n$-dimensional compact Riemannian manifold with boundary $\partial M:=\Sigma$. Considering the Schr\"odinger operator
\begin{equation}
\mathscr{L}_\lambda := - \Delta_g - \lambda : C^\infty(M) \rightarrow C^\infty(M),
\end{equation}
where $\lambda \in \mathbb{R}$. \\

Analogous to the notion of harmonic extension, we introduce a similar one with respect to the operator $\mathscr{L}_\lambda$; in particular, it coincides with harmonic extension when $\lambda = 0$.\\

 \begin{definition}\label{def:L_lambda_extension}
Let $w\in C^\infty(\Sigma)$ be a smooth function on the boundary $\Sigma$, we say $\varphi$ is an $\mathscr{L}_\lambda$-extension of $w$, if
  \begin{equation}
    \left\{
\begin{split}
\mathscr{L}_\lambda \varphi&=0,   \quad &\text{on} \  M\\
 \varphi&=w, \quad &\text{on} \  \Sigma.
 \end{split}
\right.
\end{equation}
In particular for $w \geq 0$, we say $\varphi$ is a \emph{positive $\mathscr{L}_\lambda$-extension} of $w$, if $\varphi > 0$ on $M \backslash \Sigma$.
 \end{definition}

\begin{remark}
The extension $\varphi$ always exists, if $\lambda\notin\text{Spec}(-\Delta_g)$. Moreover, if $w \not\equiv 0$ on $\Sigma$ additionally, then $\varphi$ is an non-trivial extension. If $\lambda \in\text{Spec}(-\Delta_g)$ and $w$ vanishes identically on $\Sigma$, then $\varphi$ is an eigenfunction of $(- \Delta_g)$ and hence can be chosen to be a positive extension of $w = 0$.
\end{remark}

 For our purpose of this article, we only consider positive $\mathscr{L}_n$-extensions, say $\varphi$, for a given non-negative smooth function $w$ defined on the boundary $\Sigma$. That is, $\varphi$ solves the equation
\begin{equation}
    \left\{
\begin{split}
\Delta_g \varphi + n \varphi&=0,   \quad &\text{on} \  M \\
 \varphi&=w, \quad &\text{on} \  \Sigma
 \end{split}
\right.
\end{equation}
with $\varphi > 0$ on $M \backslash \Sigma$. For simplicity, we denote
\begin{equation}\label{alpha}
\alpha:=\max_{M}\left(\varphi^2+|\nabla\varphi|^2 \right)^{\frac{1}{2}} > 0
\end{equation}
and $\rho:= \left(1 + \alpha^{-1} w \right)^{-1}$.

\begin{definition}
Let $(M^n,g)$ be a compact Riemannian manifold with boundary $\Sigma = \bigcup_{i=1}^m \Sigma_i$. For a smooth non-negative function $w \in C^\infty(\Sigma)$, we say $(\Sigma, g|_\Sigma)$ is \textbf{$\mathscr{L}_n^w$-regular}, if each connected component $\Sigma_i$ satisfies
\begin{itemize}
\item $(\Sigma_i, \rho^2 g|_{\Sigma_i})$ can be embedded in $\mathbb{R}^n$ isometrically as a convex hypersurface;
\item the mean curvature of $\Sigma_i$ with respect to $g$ satisfies $$H_g^i > (n-1)(\alpha+w_i)^{-1}\partial_{\nu_g}\varphi ,$$ where $w_i = w|_{\Sigma_i}$ and $\varphi$ is a positive $\mathscr{L}_n$-extension of $w$.
\end{itemize}
\end{definition}

An important property of the $\mathscr{L}_n$-extension of a given smooth function defined on the boundary is the following geometric-analytical inequalities hold on the manifold, if appropriate geometric assumptions are posed on the manifold:
\begin{theorem}\label{thm:L_n-inequality}
Let $(M^n,g)$ be an $n$-dimensional compact Riemannian manifold with boundary $\Sigma=\bigcup^m_{i=1}\Sigma_i $ and scalar curvature $$R_g\geq n(n-1).$$ 

Consider a smooth non-negative function $w \in C^\infty(\Sigma)$ with its positive $\mathscr{L}_n$-extension $\varphi$. Suppose the boundary $(\Sigma, g|_\Sigma)$ is $\mathscr{L}_n^w$-regular, then following inequalities hold
 \begin{equation}
 \int_{\Sigma_i}H_g^i \left(1+\frac{w_i}{\alpha} \right)^{2-n}d\sigma_g \leq \int_{\Sigma_i}\left( \hat H_0^i +(n-1)\frac{\partial_\nu \varphi}{\alpha} \right) \left(1+\frac{w_i}{\alpha} \right)^{1-n}d\sigma_g, \quad i=1,\cdots, m,
 \end{equation}
where $\hat H_0^i$ is the mean curvature of $(\Sigma_i, \rho^2 g|_{\Sigma_i})$ embedded in $\mathbb{R}^n$ with 
$$\alpha:=\max_{M}\left(\varphi^2+|\nabla\varphi|^2 \right)^{\frac{1}{2}}$$
and
$$\rho:= \left(1 + \alpha^{-1} w \right)^{-1}.$$
 Moreover, the equality holds for some component $\Sigma_{i_0}$ if and only if $(M^n,g)$ is isometric to a convex domain in the round sphere $\mathbb{S}^n$ and $\varphi$ is the restriction of a first eigenfunction of $\Delta_{\mathbb{S}^n}$ on it.
\end{theorem}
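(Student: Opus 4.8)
The plan is to run the Bunting--Masood-ul-Alam conformal trick and then feed the result into the positive mass theorem for the Brown--York mass (Shi--Tam, and its higher-dimensional version of Lohkamp and Schoen--Yau). The idea is to deform $g$ to a metric of nonnegative scalar curvature whose boundary data exactly matches the quantities appearing in $\mathscr{L}_n^w$-regularity. Concretely, I would extend the boundary factor $\rho$ into the interior using the extension $\varphi$ itself, setting
$$\Psi:=\left(1+\alpha^{-1}\varphi\right)^{-1}\quad\text{on }M,\qquad \Psi|_{\Sigma}=\rho,$$
and consider $\bar g:=\Psi^2 g$. Since $\varphi>0$ on $M\setminus\Sigma$, the factor $\Psi$ is smooth and positive, and $\bar g|_{\Sigma_i}=\rho^2 g|_{\Sigma_i}$ is precisely the boundary metric that the hypothesis embeds convexly into $\mathbb{R}^n$.

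Writing $\beta:=\Psi^{-1}=1+\alpha^{-1}\varphi$ and applying the conformal transformation law for the scalar curvature together with the defining equation $\Delta_g\varphi=-n\varphi$, a direct computation gives
$$R_{\bar g}=\beta^2 R_g-2n(n-1)\beta\alpha^{-1}\varphi-n(n-1)\alpha^{-2}|\nabla\varphi|^2.$$
The crux of the construction is the elementary identity $\beta^2-2\beta\alpha^{-1}\varphi=1-\alpha^{-2}\varphi^2$, which combined with $R_g\ge n(n-1)$ yields
$$R_{\bar g}\ge n(n-1)\,\alpha^{-2}\left(\alpha^2-\varphi^2-|\nabla\varphi|^2\right)\ge 0.$$
The final inequality uses exactly the normalization of $\alpha$ as the global maximum of $(\varphi^2+|\nabla\varphi|^2)^{1/2}$; this is the single point where that choice is indispensable, and it is what makes the positive mass theorem applicable.

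Next I would record the boundary data of $\bar g$: the induced volume form scales as $d\bar\sigma=\beta^{1-n}d\sigma_g$, and the mean curvature transforms as $\bar H_i=\beta H_g^i-(n-1)\alpha^{-1}\partial_\nu\varphi$ on each $\Sigma_i$. A one-line manipulation shows that $\bar H_i>0$ is equivalent to the inequality $H_g^i>(n-1)(\alpha+w_i)^{-1}\partial_\nu\varphi$ in the definition of $\mathscr{L}_n^w$-regularity. Thus $(M,\bar g)$ has nonnegative scalar curvature, and each boundary component has positive mean curvature and embeds in $\mathbb{R}^n$ as a convex hypersurface. Applying the Brown--York positive mass theorem componentwise gives $\int_{\Sigma_i}\bar H_i\,d\bar\sigma\le\int_{\Sigma_i}\hat H_0^i\,d\bar\sigma$; substituting the expressions for $\bar H_i$ and $d\bar\sigma$ and using $\beta|_{\Sigma_i}=1+w_i/\alpha$ rearranges precisely into the claimed inequality.

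For the equality statement I would invoke the rigidity part of the positive mass theorem: equality for some $\Sigma_{i_0}$ forces $(M,\bar g)$ to be isometric to a flat domain in $\mathbb{R}^n$, hence $R_{\bar g}\equiv 0$. Tracing this back through the scalar curvature identity forces simultaneously $R_g\equiv n(n-1)$ and $\varphi^2+|\nabla\varphi|^2\equiv\alpha^2$. Then $g=\Psi^{-2}\bar g$ is conformally flat with constant scalar curvature $n(n-1)$, while differentiating $\varphi^2+|\nabla\varphi|^2\equiv\alpha^2$ gives the Obata-type relation $\nabla^2\varphi(\nabla\varphi,\cdot)=-\varphi\,d\varphi$; together with $\Delta_g\varphi=-n\varphi$ these should identify $g$ as the round metric on a domain of $\mathbb{S}^n$ and $\varphi$ as the restriction of a first eigenfunction, the domain being convex by the embedding hypothesis. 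I expect the two genuine obstacles to be, first, the careful invocation of the Brown--York positive mass theorem when $\Sigma$ has several components -- each component must be realized as the inner boundary of a scalar-flat asymptotically flat extension with matching induced metric and mean curvature, so that the glued corner carries nonnegative distributional scalar curvature, before the positive mass theorem and the Shi--Tam monotonicity along the exterior foliation deliver the per-component inequality -- and, second, upgrading the pointwise relations of the equality case to the global isometry with a convex domain of $\mathbb{S}^n$.
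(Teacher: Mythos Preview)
Your proposal is correct and follows essentially the same route as the paper: the conformal metric $\bar g=\Psi^2 g$ you introduce is literally the paper's $\hat g=u^{4/(n-2)}g$ (since $u^{4/(n-2)}=(1+\alpha^{-1}\varphi)^{-2}=\Psi^2$), and your scalar-curvature and mean-curvature computations, followed by Shi--Tam, reproduce the paper's Lemmas and main inequality verbatim.

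The one place where the paper is more explicit than your sketch is the rigidity step you flagged as an obstacle. Rather than extracting $\nabla^2\varphi=-\varphi g$ directly from $\varphi^2+|\nabla\varphi|^2\equiv\alpha^2$ (which that identity alone does not give), the paper uses that $\hat g$ is flat to write the traceless Ricci as $E_g=-(n-2)v^{-1}(\nabla^2v-\tfrac{1}{n}g\Delta_g v)$ with $v=1+\alpha^{-1}\varphi$, integrates $\int_M v|E_g|^2\,dv_g$ by parts against $\operatorname{div}_gE_g=0$, and checks that the boundary term $\int_\Sigma E_g(\nabla v,\nu)\,d\sigma_g$ vanishes precisely because $\varphi^2+|\nabla\varphi|^2$ is constant; this forces $E_g\equiv0$, hence $g$ is Einstein, conformally flat with $R_g=n(n-1)$, and therefore spherical. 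Your outline is compatible with this, but that integration-by-parts is the missing ingredient you were anticipating.
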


\begin{remark}
When $w = 0$ and $\lambda$ is less than or equals to the first eigenvalue of Laplacian, this inequality recovers the one in \cite{Yuan_2}. It would be interesting to compare differences between both of the conclusion and proofs.
\end{remark}

Before giving the proof of those desired inequalities, we get some preparations first. \\

Consider the conformal metric $\hat{g}:=u^{\frac{4}{n-2}}g$, where $u=(1+\alpha^{-1} \varphi)^{-\frac{n-2}{2}}$. Note that we have $2^{- \frac{n-2}{2}} \leq u \leq 1$, since $\varphi$ is a positive extension.

\begin{lemma}\label{lem:conformal_scalar_curvature}
Suppose the scalar curvature satisfies 
$$R_g \geq n(n-1),$$
then the scalar curvature of the conformal metric $\hat g$ is non-negative and it vanishes identically if and only if $R_g = n(n-1)$ and $\varphi^2+|\nabla\varphi|^2$ is a constant on $M$.
\end{lemma}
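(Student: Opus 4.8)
The plan is to use the standard conformal transformation law for scalar curvature and then exploit the very particular algebraic form of both the conformal factor $u$ and the equation satisfied by $\varphi$. Recall that under a conformal change $\hat{g} = u^{4/(n-2)} g$ with $u > 0$, the scalar curvature transforms as
\[
\hat{R} = u^{-\frac{n+2}{n-2}}\left(-\frac{4(n-1)}{n-2}\Delta_g u + R_g u\right),
\]
so the sign of $\hat{R}$ is governed entirely by the conformal Laplacian $L_g u := -\frac{4(n-1)}{n-2}\Delta_g u + R_g u$. The lemma therefore reduces to showing $L_g u \geq 0$ and identifying precisely when $L_g u \equiv 0$.

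First I would set $v := 1 + \alpha^{-1}\varphi$, so that $u = v^{-(n-2)/2}$, and compute $\Delta_g u$ by the chain rule. The only place where the defining PDE for $\varphi$ enters is through $\Delta_g v = \alpha^{-1}\Delta_g\varphi = -n\,\alpha^{-1}\varphi$, using $\Delta_g\varphi + n\varphi = 0$; the first-order term contributes $|\nabla v|^2 = \alpha^{-2}|\nabla\varphi|^2$. After substituting and factoring out the positive quantity $v^{-(n+2)/2}$, the expression $L_g u$ becomes that factor times a bracket $B$ which is quadratic in $\alpha^{-1}\varphi$ with coefficients involving $R_g$ and $\alpha^{-2}|\nabla\varphi|^2$.

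The decisive algebraic step is to reorganize $B$ by writing $R_g = n(n-1) + (R_g - n(n-1))$ and collecting the terms proportional to $n(n-1)$. I expect $B$ to collapse exactly to
\[
n(n-1)\,\alpha^{-2}\bigl(\alpha^2 - \varphi^2 - |\nabla\varphi|^2\bigr) + \bigl(R_g - n(n-1)\bigr)\,v^2.
\]
Both summands are manifestly non-negative: the first because $\alpha^2 = \max_M(\varphi^2 + |\nabla\varphi|^2) \geq \varphi^2 + |\nabla\varphi|^2$ pointwise by the very definition of $\alpha$, and the second because of the hypothesis $R_g \geq n(n-1)$ together with $v^2 > 0$. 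This immediately yields $L_g u \geq 0$ and hence $\hat{R} \geq 0$.

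For the equality characterization, $\hat{R} \equiv 0$ forces $B \equiv 0$, which in turn forces both non-negative summands to vanish identically; since $v^2 > 0$ everywhere, this is exactly equivalent to $R_g = n(n-1)$ together with $\varphi^2 + |\nabla\varphi|^2 \equiv \alpha^2$, i.e. $\varphi^2 + |\nabla\varphi|^2$ being constant on $M$, as claimed. I do not anticipate a genuine conceptual obstacle, since the argument is essentially a bookkeeping computation; the step most prone to error is the chain-rule evaluation of $\Delta_g u$ and the subsequent factoring, where the exponent $-(n-2)/2$ and the coefficient $\tfrac{4(n-1)}{n-2}$ must cancel cleanly to reproduce the factor $n(n-1)$. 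Keeping careful track of these constants is where I would be most careful.
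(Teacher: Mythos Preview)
Your proposal is correct and follows essentially the same route as the paper: introduce $\psi=\alpha^{-1}\varphi$ (your $v=1+\psi$), compute $\Delta_g u$ by the chain rule using $\Delta_g\varphi=-n\varphi$, plug into the conformal scalar-curvature law, and reduce everything to the quantity $1-\psi^2-|\nabla\psi|^2\geq 0$. The only cosmetic difference is that you keep the two non-negative summands $n(n-1)\alpha^{-2}(\alpha^2-\varphi^2-|\nabla\varphi|^2)$ and $(R_g-n(n-1))v^2$ separate, whereas the paper applies $R_g\geq n(n-1)$ first and then simplifies; this makes your equality analysis slightly more transparent but is mathematically identical.
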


\begin{proof}
For simplicity, we denote $\psi:= \alpha^{-1}\varphi$. Then
\begin{align*}
\Delta_g u &= - \frac{n-2}{2} (1+ \psi)^{- \frac{n}{2}} \Delta_g \psi + \frac{n(n-2)}{4}  (1+ \psi)^{- \frac{n + 2}{2} } |\nabla \psi|^2 \\
&= \frac{n(n-2)}{4} (1+ \psi)^{- \frac{n+2}{2}} \left( 2 (1+ \psi) \psi +  |\nabla \psi|^2 \right).
\end{align*}
Now from the conformal transformation law for scalar curvature,
\begin{align*}
R_{\hat{g}} =&u^{-\frac{n+2}{n-2}}\left(R_gu-\frac{4(n-1)}{n-2}\Delta_g u\right) \\
=& R_g(1+\psi)^2-n(n-1) \left[2(1+\psi)\psi+|\nabla\psi|^2 \right]\\
\geq&n(n-1) \left[(1+\psi)^2-2(1+\psi)\psi-|\nabla\psi|^2 \right]\\
= & n(n-1) \left( 1-\psi^2-|\nabla\psi|^2 \right)\\
=&n(n-1) \left[ 1- \alpha^{-2}\left(\varphi^2+|\nabla\varphi|^2 \right) \right] \\
\geq&0.
\end{align*}
The equality case follows trivially.
\end{proof}

As for the mean curvature, we have
\begin{lemma}\label{lem:thm_A_mean_curvature}
Suppose the boundary $(\Sigma, g|_{\Sigma})$ is $\mathscr{L}_n$-regular, then the mean curvature of $\Sigma_i$ for the conformal metric $\hat g$ is strictly positive and given by
$$H^i_{\hat{g}} = \left( 1 +  \alpha^{-1} w_i \right) H^i_g - (n-1) \alpha^{-1} \partial_{\nu_g}\varphi .$$
\end{lemma}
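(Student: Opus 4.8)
The plan is to apply the standard conformal transformation law for the mean curvature of a hypersurface. If $\hat g = e^{2\phi} g$ on an $n$-manifold and $\nu_g$ denotes the $g$-unit outward normal to $\Sigma$, then the (trace) mean curvatures are related by $H_{\hat g} = e^{-\phi}\big(H_g + (n-1)\,\partial_{\nu_g}\phi\big)$. The entire lemma then reduces to identifying the conformal factor $\phi$ associated with $u^{4/(n-2)}$ and evaluating $\partial_{\nu_g}\phi$ along $\Sigma$.

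First I would rewrite the conformal factor in terms of $\psi := \alpha^{-1}\varphi$. Since $u = (1+\psi)^{-(n-2)/2}$, one has $e^{2\phi} = u^{4/(n-2)} = (1+\psi)^{-2}$, so that $\phi = -\ln(1+\psi)$ and $e^{-\phi} = 1+\psi$. Differentiating gives $\partial_{\nu_g}\phi = -(1+\psi)^{-1}\partial_{\nu_g}\psi$. Restricting to a component $\Sigma_i$, where the boundary condition $\varphi = w$ forces $\psi = \alpha^{-1}w_i$ and $\partial_{\nu_g}\psi = \alpha^{-1}\partial_{\nu_g}\varphi$, I would substitute these into the transformation law to obtain
\begin{equation*}
H^i_{\hat g} = (1+\psi)H^i_g - (n-1)\partial_{\nu_g}\psi = \big(1+\alpha^{-1}w_i\big)H^i_g - (n-1)\alpha^{-1}\partial_{\nu_g}\varphi,
\end{equation*}
which is exactly the claimed identity; note that the factor $e^{-\phi} = 1+\psi$ cancels the denominator produced by $\partial_{\nu_g}\phi$, which is what removes the conformal weight from the final expression.

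It remains to verify strict positivity. Factoring the identity as $H^i_{\hat g} = \alpha^{-1}\big[(\alpha + w_i)H^i_g - (n-1)\partial_{\nu_g}\varphi\big]$, positivity is equivalent to $H^i_g > (n-1)(\alpha+w_i)^{-1}\partial_{\nu_g}\varphi$, which is precisely the mean curvature condition built into the $\mathscr{L}_n^w$-regularity hypothesis. Hence $H^i_{\hat g} > 0$ follows immediately.

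The only genuinely delicate point is the bookkeeping of conventions: one must fix the orientation of $\nu_g$ (outward) and the convention for $H$ (trace of the second fundamental form rather than its average) consistently with the conformal transformation law, since a sign or normalization slip would corrupt the coefficient $(n-1)$ or the sign of the $\partial_{\nu_g}\varphi$ term. Once these are pinned down, the computation is a direct substitution with no analytic subtlety.
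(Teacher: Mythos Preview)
Your proof is correct and follows essentially the same approach as the paper: both apply the standard conformal transformation law for mean curvature to $\hat g = u^{4/(n-2)}g$, evaluate the conformal factor and its normal derivative on $\Sigma_i$ using $u=(1+\alpha^{-1}\varphi)^{-(n-2)/2}$, and then invoke the mean-curvature inequality from $\mathscr{L}_n^w$-regularity to conclude $H^i_{\hat g}>0$. The only cosmetic difference is that the paper writes the transformation law in the form $H_{\hat g}=u^{-2/(n-2)}\big(H_g+\tfrac{2(n-1)}{n-2}\partial_{\nu_g}\log u\big)$ rather than via $\phi=-\ln(1+\psi)$, but this is the same computation.
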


\begin{proof}
By the conformal transformation law of mean curvature, we obtain
\begin{align*}
H^i_{\hat{g}}
=u^{-\frac{2}{n-2}}\left(H^i_g+\frac{2(n-1)}{n-2}\partial_{\nu_g}\log u \right)
=\alpha^{-1} \left( \alpha +  w_i \right) \left(H^i_g-(n-1)(\alpha+w_i)^{-1} \partial_{\nu_g}\varphi \right)
> 0,
\end{align*}
on $\Sigma_i$, where the last inequality follows from the assumption that $(\Sigma, g|_\Sigma)$ is $\mathscr{L}_n$-regular.
\end{proof}

Now we give the proof of our main theorem:
\begin{proof}[Proof of Theorem \ref{thm:L_n-inequality}]
From the assumption that the boundary $(\Sigma, g|_\Sigma)$ is $\mathscr{L}_n^w$-regular, by definition, each connected component $\Sigma_i$ with the conformal metric $\hat g|_{\Sigma_i}$ can be embedded in $\mathbb{R}^n$ as a convex hypersurface. In particular, $\Sigma_i$ has positive mean curvature $\hat H_0^i$ in $\mathbb{R}^n$. Now with the aid of Lemma \ref{lem:conformal_scalar_curvature} and \ref{lem:thm_A_mean_curvature}, we have
\begin{align*}
m_{BY}(\Sigma_i,\hat{g})
=\int_{\Sigma_i}\left(\hat H_0^i - H_{\hat{g}}^i\right)d\sigma_{\hat{g}}
=\int_{\Sigma_i}\left[ \hat H_0^i - \rho_i^{-1} H^i_g + (n-1) \alpha^{-1} \partial_{\nu_g}\varphi \right]\rho_i^{n-1}d\sigma_g
\geq0,
\end{align*}
due to the \emph{Positive Mass Theorem for Brown-York mass} (see \cite{Shi-Tam, Schoen-Yau_3}). That is,
\begin{align*}
 \int_{\Sigma_i}H^i_g \left(1+\frac{w_i}{\alpha}\right)^{2-n}d\sigma_g \leq \int_{\Sigma_i} \left(\hat H_0^i +(n-1)\frac{\partial_{\nu_g} \varphi}{\alpha} \right) \left(1+\frac{w_i}{\alpha} \right)^{1-n}d\sigma_g.
 \end{align*}
 
The rigidity of the equality case follows a similar argument in \cite{Yuan_2}. For the convenience of readers, we include it here. It would be interesting to compare both of them.

If $(M^n, g)$ is a convex domain in $\mathbb{S}^n$ and $\varphi$ is the restriction of a first eigenfunction of $\Delta_{\mathbb{S}^n}$, then $\hat g$ is flat and the Brown-York mass $m_{BY}(\Sigma, \hat g)$ vanishes. Therefore, the equality holds in this case.

Conversely, suppose the equality holds for some component $\Sigma_{i_0}$, it is equivalent that the Brown-York mass
$$m_{BY}(\Sigma_{i_0},\hat{g}) = 0,$$
which means the conformal metric $\hat g$ is flat and $(M, \hat g)$ is a connected compact convex domain in $\mathbb{R}^n$ due to the rigidity of vanishing Brown-York mass.
This implies that $g$ is conformally flat with scalar curvature $R_g = n(n-1)$ and $$\varphi^2 + |\nabla \varphi|^2 = \alpha^2$$ is a constant on $M$ by Lemma \ref{lem:conformal_scalar_curvature}. 

In order to show $g$ is spherical, we only need to show it is also Einstein. The essential idea is an adapted version of the proof for \emph{Obata theorem} (see Proposition 3.1 in \cite{Lee-Parker}).

For simplicity, we denote 
$$v:= \rho^{-1} = 1 + \alpha^{-1} \varphi$$
and hence $\hat g = v^{-2} g$. 
This gives us 
$$E_{\hat g} = E_g + (n-2) v^{-1} \left( \nabla^2 v - \frac{1}{n} g \Delta_g v \right) = 0.$$
That is,
$$E_g = - (n-2) v^{-1} \left( \nabla^2 v - \frac{1}{n} g \Delta_g v \right).$$
Thus, we have
\begin{align*}
\int_M v |E_g|^2 dv_g =& - (n-2) \int_M \langle \nabla^2 v - \frac{1}{n} g \Delta_g v , E_g \rangle dv_g \\
=& (n-2) \int_M \langle \nabla v, div_g E_g \rangle dv_g - (n-2) \int_\Sigma E_g (\nabla v, \nu) d\sigma_g \\
=& - (n-2) \int_\Sigma E_g (\nabla v, \nu) d\sigma_g,
\end{align*}
where $$div_g E_g = \frac{n-2}{2n} dR_g = 0$$
by the \emph{contracted second Bianchi identity}.

On the other hand, 
\begin{align*}
E_g (\nabla v, \nu) =& - (n-2) v^{-1} \left( \nabla^2 v (\nabla v, \nu)  - \frac{1}{n} g(\nabla v, \nu)  \Delta_g v \right) \\
=& - (n-2)  \alpha^{-2}\left( - |\nabla \varphi|_g^{-1} \nabla^2 \varphi (\nabla \varphi , \nabla \varphi)  + \frac{1}{n} |\nabla \varphi|_g  \Delta_g \varphi \right) \\
=&  (n-2) \alpha^{-2} \left( |\nabla \varphi|_g^{-1} \nabla^2 \varphi (\nabla \varphi , \nabla \varphi)  + \varphi |\nabla \varphi|_g \right) \\
=& - \frac{n-2}{2} \alpha^{-2} \nabla_\nu |\nabla \varphi|^2 + (n-2) \alpha^{-2} w |\nabla \varphi|_g
\end{align*}
on $\Sigma$. Since $$\varphi^2 + |\nabla \varphi|^2  = \alpha^{-2}$$ on $M$, we have 
$$\nabla_\nu |\nabla \varphi|^2 = - \nabla_\nu \varphi^2 = - 2 \varphi \nabla_\nu \varphi = - 2 w |\nabla \varphi|_g.$$
Thus
$$E_g (\nabla v, \nu) = 0$$
and hence
$$\int_M v |E_g|^2 dv_g  =  - (n-2) \int_\Sigma E_g (\nabla v, \nu) d\sigma_g = 0.$$
\emph{i.e.} $(M^n, g)$ is Einstein. Together with the fact that $(M^n, g)$ is conformally flat and scalar curvature $R_g = n(n-1)$, we conclude that $(M^n, g)$ is isometric to a connected compact domain in the round sphere $\mathbb{S}^n$. Clearly, $(M^n, g)$ is convex when embedded in $\mathbb{S}^n$, since $(M^n, \hat g)$ is convex in $\mathbb{R}^n$ and conformal to $(M^n, g)$. Easy to see $\varphi$ is the restriction of a first eigenfunction from the equation it satisfies.
\end{proof}

For the purposes of this article, we are more interested in the case when $w$ is a constant. For conveniences of our applications, we summarize it in the following corollary. Note that we can get an analogous a priori estimate for first eigenfunction derived in \cite{Yuan_2} if we simply take the parameter $\tau$ to be zero. 
\begin{corollary}\label{cor:estimate_tau}
Let $(M^n,g)$ be an $n$-dimensional compact Riemannian manifold with boundary $\Sigma=\bigcup^m_{i=1}\Sigma_i $. Suppose its scalar curvature satisfies $R_g\geq n(n-1)$ and the boundary $(\Sigma, g|_\Sigma)$ can be embedded in $\mathbb{R}^n$ as a convex hypersurface. 

For a given constant $\tau \geq 0$, suppose $\varphi > 0$ solves the boundary value problem
\begin{equation}
    \left\{
\begin{split}
\Delta_g \varphi + n \varphi&=0,   \quad &\text{on} \  M \\
 \varphi&=\tau, \quad &\text{on} \  \Sigma
 \end{split}
\right.
\end{equation}
and the mean curvature of $\Sigma$ with respect to the metric $g$ satisfies that
\begin{align}
H_g^i > (n-1)(\alpha+ \tau)^{-1} \partial_{\nu_g} \varphi
\end{align}
on $\Sigma_i$, $i = 1, \cdots, m,$ where $$\alpha:=\max_{\overline{M}}\left(\varphi^2+|\nabla\varphi|^2 \right)^{\frac{1}{2}}.$$ Then the following inequalities hold
  \begin{equation}\label{eqn:generalized_eigenfct_est}
\int_{\Sigma_i}|\nabla \varphi|d\sigma_g \leq \frac{m_{BY}(\Sigma_i, g)}{n-1} \left(\max_{\overline{M}}\left(\varphi^2+|\nabla\varphi|^2 \right)^{\frac{1}{2}} + \tau \right) , \quad i=1,\cdots, m,
 \end{equation}
where the equality holds for some component $\Sigma_{i_0}$ if and only if $(M^n,g)$ is isometric to a geodesic ball in the round sphere $\mathbb{S}^n$.
\end{corollary}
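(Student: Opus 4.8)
The plan is to read off Corollary~\ref{cor:estimate_tau} as the constant-boundary-data specialization $w\equiv\tau$ of Theorem~\ref{thm:L_n-inequality}, and then repackage the resulting inequality in terms of the \emph{unscaled} Brown-York mass $m_{BY}(\Sigma_i,g)$. First I would check that the hypotheses match: with $w_i=\tau$ constant the conformal factor $\rho=(1+\alpha^{-1}\tau)^{-1}$ is a constant, so $\rho^2 g|_{\Sigma_i}$ is merely a homothetic rescaling of $g|_{\Sigma_i}$; hence the assumption that $(\Sigma_i,g|_{\Sigma_i})$ embeds in $\mathbb{R}^n$ as a convex hypersurface is equivalent to the corresponding assumption for $\rho^2 g|_{\Sigma_i}$, and the mean-curvature hypothesis $H^i_g>(n-1)(\alpha+\tau)^{-1}\partial_{\nu_g}\varphi$ is exactly the requirement that $(\Sigma,g|_\Sigma)$ be $\mathscr{L}_n^w$-regular. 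Thus Theorem~\ref{thm:L_n-inequality} applies verbatim.

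Next, in the inequality of Theorem~\ref{thm:L_n-inequality} every occurrence of $(1+w_i/\alpha)$ is now the constant $c:=1+\tau/\alpha=\alpha^{-1}(\alpha+\tau)$, so I can pull these factors outside the integrals and cancel a common power of $c$. I would then invoke the homothety relation $\hat H_0^i=\rho^{-1}H_0^i=c\,H_0^i$ (mean curvature scales inversely to length under the constant rescaling $\rho^2 g$), where $H_0^i$ denotes the mean curvature of $(\Sigma_i,g|_{\Sigma_i})$ in $\mathbb{R}^n$. After dividing through, the inequality collapses to $\int_{\Sigma_i}(H_g^i-H_0^i)\,d\sigma_g\le c^{-1}(n-1)\alpha^{-1}\int_{\Sigma_i}\partial_{\nu_g}\varphi\,d\sigma_g$, that is, $-m_{BY}(\Sigma_i,g)\le c^{-1}(n-1)\alpha^{-1}\int_{\Sigma_i}\partial_{\nu_g}\varphi\,d\sigma_g$.

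The remaining analytic input is the sign and size of the normal derivative. Since $\varphi\equiv\tau$ on $\Sigma$, its tangential gradient vanishes, so $\nabla\varphi=(\partial_{\nu_g}\varphi)\,\nu$ on the boundary and $|\nabla\varphi|=|\partial_{\nu_g}\varphi|$. Moreover $\Delta_g\varphi=-n\varphi<0$ on $M\setminus\Sigma$, so $\varphi$ is superharmonic and attains its minimum $\tau$ on $\Sigma$; with the outward normal convention this forces $\partial_{\nu_g}\varphi\le 0$, whence $\partial_{\nu_g}\varphi=-|\nabla\varphi|$ on $\Sigma$. Substituting this and multiplying by $c\alpha=\alpha+\tau$ turns the previous line into $\int_{\Sigma_i}|\nabla\varphi|\,d\sigma_g\le\frac{\alpha+\tau}{n-1}\,m_{BY}(\Sigma_i,g)$, which is precisely \eqref{eqn:generalized_eigenfct_est}.

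Finally, for rigidity I would trace equality back through the chain: equality in \eqref{eqn:generalized_eigenfct_est} for some $\Sigma_{i_0}$ is equivalent to $m_{BY}(\Sigma_{i_0},\hat g)=0$, which by the rigidity clause of Theorem~\ref{thm:L_n-inequality} forces $(M^n,g)$ to be a convex domain in $\mathbb{S}^n$ on which $\varphi$ restricts to a first eigenfunction of $\Delta_{\mathbb{S}^n}$. Since the boundary is the constant level set $\{\varphi=\tau\}$ and the level sets of a first spherical eigenfunction are geodesic spheres, the convex domain must be a geodesic ball. The only point requiring genuine care is the sign of $\partial_{\nu_g}\varphi$: this is where the specific structure (superharmonicity from $\Delta_g\varphi+n\varphi=0$ together with $\varphi>0$, and the Hopf boundary behaviour) enters, and it is the sole step using more than algebra on the inequality of Theorem~\ref{thm:L_n-inequality}. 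Everything else is bookkeeping with the constant conformal factor $c$.
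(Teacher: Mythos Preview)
Your proposal is correct and follows essentially the same route as the paper: specialize Theorem~\ref{thm:L_n-inequality} to $w\equiv\tau$, use Hopf/minimum-principle to identify $\partial_{\nu_g}\varphi=-|\nabla\varphi|$, and invoke the rigidity clause of Theorem~\ref{thm:L_n-inequality} for the equality case. You simply make explicit the algebra the paper leaves implicit (the constant factor $c=1+\tau/\alpha$ and the homothety relation $\hat H_0^i=cH_0^i$), and your rigidity argument via level sets of a first spherical eigenfunction is a slightly different phrasing of the paper's observation that $\Sigma$ must be the intersection of $\mathbb{S}^n$ with a hyperplane in $\mathbb{R}^{n+1}$; both lead to the same geodesic-ball conclusion.
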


\begin{proof}
It is easy to see that equation (\ref{eqn:generalized_eigenfct_est}) can be easily deduced from Theorem \ref{thm:L_n-inequality} simply by taking $w = \tau$. Note that in the setting here, we have $\partial_{\nu_g} \varphi < 0$ by \emph{Hopf's lemma}. For the equality case, $(M^n,g)$ has to be a convex compact domain in $\mathbb{S}^n$ by the corresponding part of Theorem \ref{thm:L_n-inequality}. However, since $(\Sigma, g|_\Sigma)$ can be embedded in $\mathbb{R}^n$, it has to be the intersection of $\mathbb{S}^n$ with a hyperplane in $\mathbb{R}^{n+1}$ and hence is the boundary of some geodesic ball of $\mathbb{S}^n$. This implies $(M^n,g)$ is a geodesic ball in $\mathbb{S}^n$.
\end{proof}

\ \\


\section{Applications on CPE metrics}

For $n\geq3$, let $(M^n, g)$ be an $n$-dimensional closed manifold with CPE metric. We will apply Corollary \ref{cor:estimate_tau} to derive an estimate for the area of each component of critical level set $\Sigma$. \\

Denote 
$$M_+ := \{ x \in M: u(x) \geq -1\}, \qquad M_- := \{ x \in M: u(x) \leq -1\}$$
and
$$\Sigma := M_+ \cap M_- =  \{ x \in M: u(x) = -1\}.$$
Without losing of generality we can assume $M_- \backslash \Sigma \neq \emptyset$, otherwise $u \geq -1$ and Besse's conjecture holds according to Hwang's result (see \cite{Hwang_1}).\\

A fundamental feature of CPE metrics is that they satisfies the following Robinson-type identity:
\begin{lemma}\label{lem:Robinson_identity}
	Suppose $g$ is CPE metric with scalar curvature $R_g=n(n-1)$, then
	$$
	\Delta_g (|\nabla u|^2+ u^2)- \left(u + 1 \right)^{-1}\nabla u \cdot\nabla(|\nabla u|^2+ u^2)
	=2|\nabla_g^2 u + u g|^2,
	$$
	on $M \backslash \Sigma$.
\end{lemma}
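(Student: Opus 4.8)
The plan is to reduce everything to pointwise tensor algebra on $M\backslash\Sigma$, where the factor $(u+1)^{-1}$ is well defined. First I would extract the two standard consequences of the CPE equation. Tracing $\nabla^2 u - g\Delta_g u - u\,Ric_g = E_g$ with $R_g = n(n-1)$ gives $\Delta_g u = -nu$, and substituting this back the equation rearranges to
$$\nabla^2 u + nu\,g = (u+1)\,Ric_g - (n-1)g,$$
so that on $M\backslash\Sigma$ one may solve for the Ricci tensor,
$$Ric_g = (u+1)^{-1}\left[\nabla^2 u + (nu+n-1)g\right].$$
This is where the weight $(u+1)^{-1}$ in the claimed identity originates, and it explains why the statement is restricted to $M\backslash\Sigma$.

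Next I would compute $\Delta_g(|\nabla u|^2 + u^2)$ term by term. Writing $P := |\nabla u|^2 + u^2$, the Bochner formula gives
$$\tfrac12\Delta_g|\nabla u|^2 = |\nabla^2 u|^2 + \langle\nabla u, \nabla\Delta_g u\rangle + Ric_g(\nabla u,\nabla u),$$
and since $\Delta_g u = -nu$ the middle term is $-n|\nabla u|^2$; combined with $\Delta_g u^2 = 2u\Delta_g u + 2|\nabla u|^2 = -2nu^2 + 2|\nabla u|^2$ this yields
$$\Delta_g P = 2|\nabla^2 u|^2 + 2\,Ric_g(\nabla u,\nabla u) - 2(n-1)|\nabla u|^2 - 2nu^2.$$

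The heart of the computation, and the step I expect to require the most care, is evaluating $Ric_g(\nabla u,\nabla u)$ from the displayed expression for $Ric_g$ and reorganizing it into a multiple of $\nabla u\cdot\nabla P$. Contracting gives $Ric_g(\nabla u,\nabla u) = (u+1)^{-1}\big[\nabla^2 u(\nabla u,\nabla u) + (nu+n-1)|\nabla u|^2\big]$, and the key substitution is $\nabla^2 u(\nabla u,\nabla u) = \tfrac12\nabla u\cdot\nabla|\nabla u|^2 = \tfrac12\,\nabla u\cdot\nabla P - u|\nabla u|^2$. After this the $|\nabla u|^2$ terms collapse to a clean multiple of $(u+1)$, producing $2\,Ric_g(\nabla u,\nabla u) = (u+1)^{-1}\nabla u\cdot\nabla P + 2(n-1)|\nabla u|^2$; the $(n-1)$ pieces then cancel against those already present, leaving
$$\Delta_g P - (u+1)^{-1}\nabla u\cdot\nabla P = 2|\nabla^2 u|^2 - 2nu^2.$$
Finally I would identify the right-hand side with $2|\nabla^2 u + u g|^2$ by a direct expansion, using $\langle\nabla^2 u, g\rangle = \Delta_g u = -nu$ and $|g|^2 = n$, so that $|\nabla^2 u + u g|^2 = |\nabla^2 u|^2 - nu^2$. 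Beyond this bookkeeping the only genuine point to watch is that every manipulation involving $(u+1)^{-1}$ stays confined to $M\backslash\Sigma$, exactly as the statement demands.
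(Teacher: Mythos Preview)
Your proof is correct and follows essentially the same route as the paper: Bochner's formula for $\tfrac12\Delta_g(|\nabla u|^2+u^2)$, the trace identity $\Delta_g u=-nu$, and the CPE equation contracted with $\nabla u\otimes\nabla u$ to express $Ric_g(\nabla u,\nabla u)$ in terms of $\nabla u\cdot\nabla P$. The only cosmetic difference is that the paper completes the square $|\nabla^2 u+u g|^2$ at the Bochner step, whereas you carry $2|\nabla^2 u|^2-2nu^2$ to the end and identify it there; both orderings are equally valid.
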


\begin{proof}
	By Bochner-Weitzenb\"ock formula, 
	\begin{align*}
	\frac{1}{2} \Delta_g \left(|\nabla u|^2 + u^2 \right) &= |\nabla_g^2 u|^2 + \nabla u \cdot \nabla \Delta_g u + Ric_g(\nabla u, \nabla u) +  u \Delta_g u + |\nabla u|^2  \\
	&= |\nabla_g^2 u+ u g|^2 - u \Delta_g u - n u^2  - (n-1) |\nabla u|^2 + Ric_g(\nabla u, \nabla u) \\
	&= |\nabla_g^2 u+ u g|^2 - (n-1) |\nabla u|^2 + Ric_g(\nabla u, \nabla u) .
	\end{align*}
	On the other hand, from equation (\ref{eqn:V-static_phi}), 
	\begin{align*}
	&  Ric_g (\nabla u,\nabla u) - (n-1) |\nabla u|^2 \\
	=& \nabla_g^2 u (\nabla u, \nabla u)  - |\nabla u|^2 \Delta_g u - u Ric_g (\nabla u,\nabla u) \\
	=& \frac{1}{2} \nabla u \cdot \nabla |\nabla u|^2 + n u |\nabla u|^2 - u Ric_g (\nabla u,\nabla u) 
	\end{align*}
	That is,
	\begin{align*}
	&\left(u + 1\right)Ric(\nabla u,\nabla u)=(n u + (n-1))|\nabla u|^2 + \frac{1}{2} \nabla u \cdot \nabla |\nabla u|^2.
	\end{align*}
	Now we have
	\begin{align*}
	&\Delta_g (|\nabla u|^2+ u^2) \\
	=&2|\nabla_g^2 u+ u g|^2-2(n-1)|\nabla u|^2 +2 Ric_g (\nabla u,\nabla u)\\
	=&2|\nabla^2 u+ u g|^2 -2(n-1) |\nabla u|^2 +2 \left( u + 1\right)^{-1} \left( (n u + (n-1))|\nabla u|^2 + \frac{1}{2} \nabla u \cdot \nabla |\nabla u|^2 \right)\\
	=&2|\nabla^2 u+ u g|^2+ \left( u + 1\right)^{-1} \nabla u\cdot \nabla(|\nabla u|^2 + u^2).
	\end{align*}
\end{proof}

Immediately, we have
\begin{proposition}\label{prop:CPE-maximum_principle}
	\begin{align}
	\max_{M_-} \left( |\nabla u|^2+ u^2 \right) = \max_\Sigma \left( |\nabla u|^2+ u^2 \right)
	\end{align}
\end{proposition}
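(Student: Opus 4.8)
The plan is to read Lemma \ref{lem:Robinson_identity} as a differential inequality and invoke the maximum principle. Writing $\Phi := |\nabla u|^2 + u^2$ and
\[
L := \Delta_g - (u+1)^{-1}\,\nabla u\cdot\nabla,
\]
the Robinson-type identity says precisely that $L\Phi = 2|\nabla_g^2 u + ug|^2 \geq 0$ on $M\setminus\Sigma$. Since $L$ is a second-order operator whose leading part is the (elliptic) Laplacian and which carries no zeroth-order term, $\Phi$ is an $L$-subsolution, so one expects its maximum over $M_-$ to be attained on the boundary $\Sigma = \partial M_-$.

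The one genuine obstacle is that the drift coefficient $(u+1)^{-1}\nabla u$ blows up as $u\to -1$, that is, precisely along $\Sigma$; hence one cannot apply a global weak maximum principle with bounded coefficients up to the boundary. I would circumvent this by working only on the open interior $\Omega := \{u < -1\} = M_-\setminus\Sigma$, where $u+1<0$ and $L$ has smooth, locally bounded coefficients and is locally uniformly elliptic. By compactness of $M_-$ the maximum $m := \max_{M_-}\Phi$ is attained at some point $p$. If $p\in\Sigma$ there is nothing more to prove, since trivially $\max_\Sigma\Phi \le \max_{M_-}\Phi$.

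Otherwise $p\in\Omega$, and I would apply Hopf's strong maximum principle (valid for operators with nonpositive zeroth-order coefficient, here $c\equiv 0$) on the connected component $U$ of $\Omega$ containing $p$: since $L\Phi\ge 0$ and $\Phi$ attains an interior maximum at $p$, it follows that $\Phi\equiv m$ on $U$. The proof is then finished by a short topological step. Because $u$ has zero mean, $\{u>-1\}$ is nonempty, so $U\subsetneq M$; as $M$ is closed and connected this forces $\partial U\neq\emptyset$, and one checks that $\partial U\subseteq\partial\Omega\subseteq\{u=-1\}=\Sigma$. Continuity of $\Phi$ on $\overline{U}$ then gives $\Phi = m$ on $\partial U\subseteq\Sigma$, whence $\max_\Sigma\Phi\ge m$, and combined with the trivial reverse inequality this yields the claimed equality.

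I expect the strong-maximum-principle invocation, together with the blow-up of the drift, to be the only delicate point; everything else (identifying the subsolution, the compactness, and the topology of $U$) is routine. A reader wishing to avoid the strong maximum principle could instead run the weak maximum principle on the sublevel sets $\{u\le -1-\varepsilon\}$, where the coefficients are bounded, and let $\varepsilon\to 0^+$, using continuity of $\Phi$ and of the level sets to pass to the limit.
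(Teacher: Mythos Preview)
Your proposal is correct. Your primary argument via the strong maximum principle on $\Omega=\{u<-1\}$ is slightly different from the paper's proof, which takes exactly the exhaustion route you sketch at the end: it applies the weak maximum principle on $M_\delta=\{u\le -1-\delta\}$ (where the drift is bounded by $\delta^{-1}|\nabla u|$) to get $\max_{M_\delta}\Phi=\max_{\partial M_\delta}\Phi$, and then lets $\delta\to 0$ using continuity. Your strong-maximum-principle argument has the virtue of making the rigidity (either the max lies on $\Sigma$ or $\Phi$ is constant on a component reaching $\Sigma$) explicit, while the paper's exhaustion argument avoids checking that the strong maximum principle applies with only locally bounded drift; since you already identify both routes, there is no gap.
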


\begin{proof}
	For any $\delta >0$, let $M_\delta:= \{ x\in M: u (x) \leq -1 - \delta \}$. 
	Applying \emph{maximum principle}, we conclude that
	$$\max_{M_\delta} \left( |\nabla u|^2+ u^2 \right) = \max_{\partial M_\delta} \left( |\nabla u|^2+ u^2 \right).$$
	Now the conclusion follows, since $M_- = \bigcup_{\delta > 0} M_\delta$.
\end{proof}

\begin{remark}
	It seems that we can conclude that $|\nabla u|^2+ u^2$ is a constant on $M$ from Lemma \ref{lem:Robinson_identity} and maximum principle, since its maximum can be achieved on $M$. However, due to the existence of the factor $(1+u)^{-1}$ before the first order term, which goes to infinity when it approaches to $\Sigma$, the equation is degenerate at $\Sigma$ and won't be elliptic any more. Thus, we can not achieve the constancy of $|\nabla u|^2+ u^2$ in such a simple manner. Nevertheless, Proposition \ref{prop:CPE-maximum_principle} still holds by continuity.\\
\end{remark}

Now we give a general area estimates for components of critical level set $\Sigma$:

\begin{proof}[Proof of Theorem A]
	Let $\varphi = -u$ and take the trace of CPE equations (\ref{eqn:CPE}) with normalization $R_g = n(n-1)$, we have
	\begin{equation}\label{eqn:V-static_phi_trace}
	\left\{
	\begin{split}
	\mathscr{L}_n\varphi&=0,   \quad &\text{on} \  M_- \\
	\varphi &=  1, \quad &\text{\ on} \  \Sigma.\ \  
	\end{split}
	\right.
	\end{equation}
	According to Definition \ref{def:L_lambda_extension}, $\varphi$ is a positive $\mathscr{L}_n$-extension of $\tau = 1$ on $\Sigma$.
		
	Now we characterize the extrinsic geometry of the critical level set $\Sigma$. By evaluating $\varphi$ to be $1$, we have
	\begin{align}\label{eqn:CPE_-1_level_set}
	\nabla^2 \varphi = - g
	\end{align}
	holds on $\Sigma$. From this, we can derive an very important feature of $\Sigma$. That is, for each $i = 1, \cdots, m$, the quantity $W_i:= |\nabla u|_{\Sigma_i} = |\nabla \varphi|_{\Sigma_i}$ is a strictly positive constant on the component $\Sigma_i$, unless $\Sigma_i$ is a single point. To see this, we can choose an orthonormal frame 
	$$\{e_1,e_2,\cdots,e_{n-1},e_n\}$$ 
	along $\Sigma_i$, where $e_n=\nu :=-\frac{\nabla\varphi}{|\nabla\varphi|}$ is the outward unit normal vector field on $\Sigma_i$. For any $k = 1, \cdots, n-1$, we have $$e_k (W_i^2) = \nabla_k |\nabla \varphi|^2 = 2 \nabla^2 \varphi ( \nabla \varphi, e_k) = - 2|\nabla \varphi|_{\Sigma_i} \varphi_{n k} = 2 W_i  g_{n k} = 0,$$
	which shows that $W_i$ is a nonnegative constant on the boundary $\Sigma$. On the other hand, suppose $W_i$ vanishes on $\Sigma_i$, in particular at some point $p\in \Sigma_i$. According to equation (\ref{eqn:CPE_-1_level_set}), $p$ would be a non-degenerate critical point of $\varphi$ and hence $\Sigma_i = \{p\}$. 
	
	When $\Sigma_i$ is a single point, the area estimate holds automatically and hence without losing of generality we may assume $\Sigma_i$ has positive $(n-1)$-dimensional measure and $W_i \neq 0$ in the rest of the argument.
	
	From equation (\ref{eqn:CPE_-1_level_set}), we can also read the mean curvature of  $\Sigma_i$ very quickly:
	$$H^i_g = - W_i^{-1} \sum_{j = 1}^{n-1} \varphi_{jj}= (n-1)W_i^{-1},$$
	which shows all components of $\Sigma$ are in fact hypersurfaces of constant mean curvature.
	By Proposition \ref{prop:CPE-maximum_principle},
		$$\alpha := \max_{M_-} ( \varphi^2 + |\nabla \varphi|^2 )^{\frac{1}{2}} = \max_\Sigma ( \varphi^2 + |\nabla \varphi|^2 )^{\frac{1}{2}} = \left( 1 + \max_i W_i^2 \right)^{\frac{1}{2}} > 1, $$
	then we have
	$$H_g^i > 0 > - (n-1) (\alpha + 1)^{-1} W_i.$$
	
	Now applying Corollary \ref{cor:estimate_tau}, we have
	$$
	Area (\Sigma_i, g)\leq \frac{\alpha + 1}{(n-1) W_i}\int_{\Sigma_i} \left( H_0^i - H_g^i \right) d\sigma_g .
	$$
	That is,
	$$
	Area (\Sigma_i, g)\leq \frac{(\alpha +1) W_i }{(n-1) (W_i^2 + \alpha + 1) } \int_{\Sigma_i} H_0^i d\sigma_g.
	$$
	On the other hand, we have Gauss equation
	$$R_\Sigma = H_0^2 - |A_0|^2 = \frac{n-2}{n-1}H_0^2 - |\overset{\circ}{A}{}_0|^2, $$
	since $\Sigma$ is embedded in $\mathbb{R}^n$. Now by H\"older inequality, we have
	\begin{align*}
	Area (\Sigma_i, g)\leq& \frac{ (\alpha + 1)^2 W_i^2 }{(n-1)^2 (W_i^2 + 1 + \alpha)^2 } \int_{\Sigma} H_0^2 d\sigma_g \\
	=& \frac{ (\alpha + 1)^2 W_i^2 }{(n-1)(n-2) (W_i^2 + 1 + \alpha)^2 } \int_{\Sigma} \left( R_\Sigma + |\overset{\circ}{A}{}_0|^2 \right) d\sigma_g.
	\end{align*}
\end{proof}

As a corollary of Theorem A, immediately we get
	\begin{equation}
	\text{Area}(\Sigma, g)\leq \frac{W^2}{(n-1)(n-2)( 1 + W^2)} \int_\Sigma \left( R_\Sigma + |\overset{\circ}{A}{}_0|^2 \right)d\sigma_g,
	\end{equation}
if $\Sigma$ is assumed to be connected. In fact, this estimate can be improved if we apply a similar \emph{Robinson-identity type} argument of Shen in \cite{Shen}. Before showing this, we present the following fundamental identity associated to a CPE manifold and the improved estimate is simply an application of this identity.\\

\begin{lemma}\label{lem:CPE_area_identity}
	Suppose $(M^n, g, u)$ is a CPE manifold with scalar curvature $R_g = n(n-1)$ and $\Sigma := \bigcup_{i=1}^m \Sigma_i$, then the following identity holds
	\begin{align}
		(n-1)(n-2) \sum_{i=1}^m \frac{1+W_i^2}{W_i} Area(\Sigma_i, g) = \sum_{i=1}^m W_i \int_{\Sigma_i} R_{\Sigma_i} d\sigma_g  - 2\int_{M_+}(1+ u)|E_g|^2 dv_g.
	\end{align}
\end{lemma}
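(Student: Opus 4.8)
The plan is to recast the Robinson-type identity of Lemma~\ref{lem:Robinson_identity} as an exact divergence and integrate it over $M_+$. Writing $P := |\nabla u|^2 + u^2$ and taking the trace of the CPE equation (which gives $\Delta_g u = -nu$), one checks that $\nabla^2 u + u g = (1+u)E_g$, so that $|\nabla^2 u + ug|^2 = (1+u)^2|E_g|^2$. Substituting this into Lemma~\ref{lem:Robinson_identity} and dividing by $(1+u)$ turns the identity into
\begin{equation*}
\mathrm{div}_g\left(\frac{\nabla P}{1+u}\right) = 2(1+u)|E_g|^2 \qquad \text{on } M_+\setminus\Sigma .
\end{equation*}
Integrated over $M_+$, the right-hand side is exactly the volume term in the claimed identity, so it remains to identify the flux of $\frac{\nabla P}{1+u}$ through $\Sigma$ with the boundary terms.

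The difficulty is that the vector field $\frac{\nabla P}{1+u}$ is singular along $\Sigma$, where $1+u=0$. I would therefore apply the divergence theorem on the regularized domain $M_+^\varepsilon := \{u \geq -1+\varepsilon\}$, whose boundary is the regular level set $\Sigma_\varepsilon := \{u = -1+\varepsilon\}$ for small $\varepsilon>0$, and then let $\varepsilon\to 0$. With $\nu$ the outward unit normal to $M_+^\varepsilon$, the boundary integral is $\varepsilon^{-1}\int_{\Sigma_\varepsilon}\partial_\nu P\,d\sigma_g$. The key point is that on $\Sigma$ the function $P$ is constant on each component (equal to $1+W_i^2$) and, using $\nabla^2 u = g$ on $\Sigma$ from (\ref{eqn:CPE_-1_level_set}), one finds $\partial_\nu P = 0$ on $\Sigma$; hence $\nabla P$ vanishes on $\Sigma$ and the flux is a genuine $0/0$ limit. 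Extracting its first-order behavior in $\varepsilon$ (equivalently, an application of l'H\^opital's rule along the normal flow, where $\frac{d}{d\varepsilon} = -W_i^{-1}\partial_\nu$) reduces the limit to
\begin{equation*}
\lim_{\varepsilon\to 0}\frac{1}{\varepsilon}\int_{\Sigma_\varepsilon}\partial_\nu P\,d\sigma_g = -\sum_{i=1}^m \frac{1}{W_i}\int_{\Sigma_i}\nabla^2 P(\nu,\nu)\,d\sigma_g ,
\end{equation*}
where $\nabla P = 0$ on $\Sigma$ has been used to discard the term involving $\nabla_\nu\nu$.

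It then remains a boundary computation to evaluate $\nabla^2 P(\nu,\nu)$ on $\Sigma$. Since $P$ is constant on each $\Sigma_i$ and $\partial_\nu P = 0$, the tangential part of the Hessian vanishes, so $\nabla^2 P(\nu,\nu) = \Delta_g P$ on $\Sigma$. The Bochner computation inside the proof of Lemma~\ref{lem:Robinson_identity}, evaluated on $\Sigma$ (where $|\nabla^2 u + ug|^2 = 0$ and $\nabla^2 u = g$), collapses to $\Delta_g P = 2W_i^2 E_g(\nu,\nu)$. Finally, because $\nabla^2 u = g$ forces $\Sigma$ to be totally umbilic with $A = W_i^{-1}g|_{\Sigma_i}$ and $H^i_g = (n-1)W_i^{-1}$, the contracted Gauss equation $R_{\Sigma_i} = R_g - 2Ric_g(\nu,\nu) + (H^i_g)^2 - |A|^2$ together with $R_g = n(n-1)$ and $E_g(\nu,\nu) = Ric_g(\nu,\nu)-(n-1)$ yields
\begin{equation*}
\nabla^2 P(\nu,\nu)\big|_{\Sigma_i} = (n-1)(n-2)(1+W_i^2) - W_i^2 R_{\Sigma_i} .
\end{equation*}
Substituting this into the flux limit, equating with $2\int_{M_+}(1+u)|E_g|^2\,dv_g$, and rearranging produces the stated identity.

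The main obstacle is the singular boundary term: one must justify the divergence theorem on $M_+^\varepsilon$ and rigorously extract the first-order-in-$\varepsilon$ behavior of the flux, since both $\partial_\nu P$ and $1+u$ vanish on $\Sigma$. Controlling the $O(\varepsilon)$ corrections, both to the integrand and to the induced measure $d\sigma_g$ on $\Sigma_\varepsilon$, is what makes the limit delicate; everything after that is algebra built on $\nabla^2 u = g$, the Bochner formula, and the Gauss equation. One should also dispose separately of any component $\Sigma_i$ degenerating to a point (where $W_i = 0$), exactly as in the proof of Theorem A, so that the factors $W_i^{-1}$ are well defined.
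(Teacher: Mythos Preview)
Your strategy coincides with the paper's: rewrite Lemma~\ref{lem:Robinson_identity} as $\mathrm{div}_g\bigl((1+u)^{-1}\nabla P\bigr)=2(1+u)|E_g|^2$, integrate over $\{u>-1+\delta\}$, take $\delta\to 0$, and finish with the Gauss equation. The computations you give (in particular $\nabla^2 P(\nu,\nu)=(n-1)(n-2)(1+W_i^2)-W_i^2 R_{\Sigma_i}$) are correct and lead to the stated identity.

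The only substantive difference is in how the singular boundary term is handled. You treat $\tfrac{1}{\varepsilon}\int_{\Sigma_\varepsilon}\partial_\nu P\,d\sigma_g$ as a $0/0$ limit, invoke a l'H\^opital step along the normal flow, and then compute $\nabla^2 P(\nu,\nu)$ via Bochner plus the vanishing of the tangential Hessian. The paper instead observes that the CPE equation already gives the pointwise identity $\nabla_\nu\nabla_\nu u + u = (1+u)\bigl(Ric(\nu,\nu)-(n-1)\bigr)$ everywhere, so on $\Sigma_\delta$ the factor $(1+u)=\delta$ cancels \emph{exactly}, not just to leading order. Consequently the integrand $-|\nabla u|\bigl(Ric(\nu,\nu)-(n-1)\bigr)$ is smooth up to $\Sigma$, and the limit is an ordinary convergence of continuous functions with no control of $O(\varepsilon)$ corrections needed. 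In other words, what you flag as ``the main obstacle'' disappears once one substitutes $\nabla^2 u + ug = (1+u)E_g$ \emph{before} passing to the boundary; this makes the paper's route shorter and avoids the analytic justification of the l'H\^opital step. Both arguments arrive at the same intermediate formula $\sum_i W_i\int_{\Sigma_i}\bigl(Ric(\nu,\nu)-(n-1)\bigr)d\sigma_g=-\int_{M_+}(1+u)|E_g|^2\,dv_g$, after which the Gauss-equation algebra is identical.
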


\begin{proof}
	It is straightforward that from Lemma \ref{lem:Robinson_identity} we have the \emph{Robinson's identity}:
	\begin{align}\label{eqn:div}
	div_g \left( \frac{1}{1+ u} \nabla(|\nabla u|^2 + u^2)\right) = 2(1+ u)|E_g|^2.
	\end{align}
	For $\delta > 0$, taking integration on both sides of equation (\ref{eqn:div}) over $$M_\delta:= \{ x\in M: u(x) > -1 + \delta\}$$ and from integration by parts, we have
	\begin{align*}
	\int_{M_\delta}(1+ u)|E_g|^2 dv_g
	=&\frac{1}{2}\int_{M_\delta}div_g \left( \frac{1}{1+ u} \nabla(|\nabla u|^2 + u^2)\right) dv_g \\
	=&\frac{1}{2}\int_{\Sigma_\delta} \frac{1}{1+ u} \nabla_{\nu_g} (|\nabla u|^2 + u^2) d\sigma_g \\
	=& - \frac{1}{\delta} \int_{\Sigma_\delta} |\nabla u| \left(\nabla_{\nu_g}\nabla_{\nu_g} u + u \right) d\sigma_g \\
	=& -  \int_{\Sigma_\delta} |\nabla u| \left(Ric( \nu_g, \nu_g) - (n-1) \right) d\sigma_g,
	\end{align*}
	where $\Sigma_\delta = \partial M_\delta$ and $\nu_g$ is the outward normal of $\Sigma_\delta$. 
	Let $\delta \rightarrow 0$, we have
	\begin{align}\label{eqn:Sigma_alpha}
	\sum_i W_i \int_{\Sigma_i} \left(Ric( \nu_g, \nu_g) - (n-1) \right) d\sigma_g = - 	\int_{M_+}(1+ u)|E_g|^2 dv_g,
	\end{align}
	From Gauss equation,
	\begin{align*}
	R_\Sigma =& R_g - 2 Ric( \nu_g, \nu_g) + H_g^2 - |A_g|^2 \\
	=& n(n-1) - 2 Ric( \nu_g, \nu_g) + (n-1)(n-2) W^{-2},
	\end{align*}	
	where the second fundamental form and mean curvature of $\Sigma$ is given by $A_g = - W^{-1} g_\Sigma$ and $H_g = - (n-1)W^{-1}$. Thus equality (\ref{eqn:Sigma_alpha}) can be written as
	\begin{align*}
	(n-1)(n-2) \sum_i \frac{1+W_i^2}{W_i} Area(\Sigma_i, g) = \sum_i W_i \int_{\Sigma_i} R_{\Sigma_i} d\sigma_g  - 2\int_{M_+}(1+ u)|E_g|^2 dv_g.
	\end{align*}
	\end{proof}

Now we have
\begin{proof}[Proof of Theorem B]
	 From Lemma \ref{lem:CPE_area_identity}, when $\Sigma$ is connected, we have
	\begin{align*}
	 Area(\Sigma, g) =& \frac{W^2}{(n-1)(n-2)(1+W^2)} \int_{\Sigma} R_{\Sigma} d\sigma_g  - \frac{2W}{(n-1)(n-2)(1+W^2)}\int_{M_+}(1+ u)|E_g|^2 dv_g\\
	 \leq& \frac{W^2}{(n-1)(n-2)(1+W^2)} \int_{\Sigma} R_{\Sigma} d\sigma_g,
	\end{align*}		
	where equality holds if and only if $E_g = 0$ on $M_+$. On the other hand, we have
	\begin{align}
	\int_M (1+ u)|E_g|^2 dv_g = 0,
	\end{align}
	from equality (\ref{eqn:div}). This shows that $E_g$ vanishes on $M$ if and only if it vanishes on $M_+$ and it concludes the theorem.

	Now for $n=3$ and applying Lemma \ref{lem:CPE_area_identity} and \emph{Gauss-Bonnet formula}, we get
	\begin{align}
	 \sum_i \frac{1+W_i^2}{W_i} Area(\Sigma_i, g) + \int_{M_+}(1+ u)|E_g|^2 dv_g = 2\pi\sum_i  W_i \chi(\Sigma_i).
	\end{align}	
	In particular, there exists at least a component $\Sigma_{i_0}$ admits positive Euler characteristic and hence is homeomorphic to $\mathbb{S}^2$ since it is orientable. Suppose $\Sigma$ is connected, from the argument above, we can conclude that $\Sigma$ is homeomorphic to $\mathbb{S}^2$ with 	
	\begin{align}
	Area(\Sigma, g)  =  \frac{4 \pi W^2}{1+W^2} - \frac{W}{1+W^2} \int_{M_+}(1+ u)|E_g|^2 dv_g \leq \frac{4 \pi W^2}{1+W^2},
	\end{align}		
	where equality holds if and only if $E_g$ vanishes on $M$.
\end{proof}

\begin{remark}
	By comparing proofs of Theorem A and B, we can find an interesting fact that the integral term
	$$\frac{2W}{(n-1)(n-2)(1+W^2)}\int_{M_+}(1+ u)|E_g|^2 dv_g$$
	represents the Brown-York mass $m_{BY}(\Sigma, \hat g)$. This fact might be helpful for us to have a better understanding of Besse's conjecture and $V$-static metrics.
\end{remark}

\ \\

\section{Area estimates for the boundary of $V$-static manifold}

For $n\geq3$, let $(M^n, g)$ be an $n$-dimensional compact $V$-static manifold with boundary $\Sigma:= \partial M$. The area estimate for each component $\Sigma_i$ is in fact equivalent to those for the critical level set in a CPE manifold. Hence it is straightforward to get these estimates simply by applying Theorem A:

\begin{proof}[Proof of Corollary A]
Recall the definition of $V$-static metric, we have
\begin{equation}\label{eqn:V-static}
    \left\{
\begin{split}
\gamma_g^* f &= \mu g,   \quad &\text{on} \  M \\
f&=0, \quad &\text{on} \  \Sigma,
 \end{split}
\right.
\end{equation}
where $\gamma_g^* f = \nabla_g^2f-g\Delta_g f-fRic_g$ and $f > 0$ on $M \backslash \Sigma$. \\

For the case $\mu = 0$, it has been solved by the second named author in \cite{Yuan_2} and hence we only need to focus on the case when $\mu > 0$.\\

For normalized scalar curvature $R_g = n(n-1)$, let $u := - \frac{n-1}{\mu} f - 1$, we can rewrite the $V$-static equation as follow:
\begin{equation}\label{eqn:V-static_phi}
    \left\{
\begin{split}
\gamma_g^* u &= E_g,   \quad &\text{on} \  M \\
u &= - 1, \quad &\text{on} \  \Sigma,
 \end{split}
\right.
\end{equation}
where $u < -1$ on $M \backslash \Sigma$. This is exactly the CPE equation on $M_-$ in the previous section. Therefore Theorem A can be applied and the conclusion follows.
\end{proof}

\ \\

\bibliographystyle{amsplain}

\begin{thebibliography}{10}
	
\bibitem{Barros-Ribeiro} Barros, A. and Ribeiro Jr, E.: \textit{Critical point equation on four-dimensional compact manifolds}, Math. Nachr. \textbf{287}: 1618 - 1623. (2013)
	
\bibitem{B-D-R-R} Batisa, R., Di\'{o}genes, R., Ranier, M. and Ribeiro, E. Jr.: \textit{Critical metrics of the volume functional on compact three-manifolds with smooth boundary }, J. Geom. Anal. \textbf{27}: 1530 - 1547. (2017)

\bibitem{Besse} Besse, A.L.: \textit{Einstein manifolds}, Springer-Verlag, New York, 1987

\bibitem{Bunting-Masood} Bunting, G. and Masood-ul-Alam, A.K.M.: \textit{Nonexistence of multiple black holes in asymptotically euclidean static vacuum space-time}, Gen. Rel. Grav. \textbf{19}: 147 - 154. (1987)

\bibitem{C-H} Chang, J. and Hwang, S.: \textit{Critical points and warped product metrics}, Bull. Korean Math. Soc. \textbf{41}: No.1, 117 - 123. (2004)

\bibitem{C-H-Y_1} Chang, J., Hwang, S. and Yun, G.: \textit{Critical point equation of the total scalar curvature functional}, Bull. Korean Math. Soc. \textbf{49}: No.3, 655 - 667. (2012)

\bibitem{C-H-Y_2} Chang, J., Hwang, S. and Yun, G.: \textit{Rigidity of the critical point equation}, Math. Nachr. \textbf{283}: 846 - 853. (2010)

\bibitem{C-H-Y_3} Chang, J., Hwang, S. and Yun, G.: \textit{Total scalar curvature and harmonic curvature}, Taiwanese J. Math. \textbf{18}: No.5, 1439 - 1458. (2014)

\bibitem{C-E-M} Corvino, J., Eichmair, M. and Miao, P.: \textit{Deformation of scalar curvature and volume}, Math. Ann. \textbf{357}: 551 - 584. (2013)

\bibitem{H-M-R} Hizagi, O., Montiel, S. and Raulot, S.: \textit{Uniqueness of the de Sitter spacetime among static vacua with positive cosmological constant}, Ann. Glob. Anal. Geom. \textbf{47}: 167-178. (2014) 

\bibitem{Hwang_1} Hwang, S.: \textit{Critical points of the scalar curvature functional on the space of metrics of constant scalar curvature}, Manuscripta Math. \textbf{103}: 135 - 142. (2000)

\bibitem{Hwang_2} Hwang, S.: \textit{The critical point equation on a three-dimensional compact manifold}, Proc. Amer. Math. Soc. \textbf{131}: 3221 - 3230. (2003)

\bibitem{Hwang_3} Hwang, S.: \textit{Three dimensional critical point of the total scalar curvature}, Bull. Korean Math. Soc. \textbf{50}: No.3, 867 - 871. (2013)

\bibitem{Lafontaine} Lafontaine, J.: \textit{Sur la g\'eom\'etrie d'une g\'en\'eralisation de l'\'equation diff\'erentielle d'Obata}, J. Math. Pures Appliqu\'ees \textbf{62}: 63 - 72. (1983)

\bibitem{Lee-Parker} Lee, J.-M. and Parker, T.: \textit{The Yamabe problem}, Bull. Amer. Math. Soc. \textbf{17}: 37 - 91. (1987)

\bibitem{Lohkamp_1} Lohkamp, J.: \textit{The higher dimensional positive mass theorem I},
arXiv:math/0608795v2. (2016)

\bibitem{Lohkamp_2} Lohkamp, J.: \textit{The higher dimensional positive mass theorem II}, arXiv:1612.07505. (2016)

\bibitem{Miao-Tam_1} Miao, P., Tam, L.-F.: \textit{On the volume functional of compact manifolds with boundary with constant scalar curvature}, Calc. Var. Partial Differential Equations. \textbf{36}, No.2, 141 - 171. (2009)

\bibitem{Miao-Tam_2} Miao, P. and Tam, L.-F.: \textit{Einstein and conformally flat critical metrics of the volume functional}, Trans. Amer. Math. Soc. \textbf{363}: 2907 - 2937. (2011)

\bibitem{Qing} Qing, J.: \textit{On the uniqueness of the AdS space-time in higher dimensions}, Ann. Henri Poincar\'e \textbf{5}: 245 - 260. (2004)

\bibitem{Qing-Yuan} Qing, J. and Yuan, W.: \textit{A note on static spaces and related problems}, J. Geom. Phys. \textbf{74}: 18 - 27. (2013)

\bibitem{Shen} Shen, Y.: \textit{A note on Fischer-Marsden's conjecture}, Proc. Amer. Math. Soc. \textbf{125}: 901 - 905. (1997) 

\bibitem{Shi-Tam} Shi, Y.-G. and Tam, L.-F.: \textit{Positive mass theorem and boundary behaviors of compact manifolds with nonnegative scalar curvature}, J. Diff. Geom. \textbf{62}: 79 - 125. (2002) 

\bibitem{Schoen-Yau_1} Schoen, R.-M. and Yau, S.-T.: \textit{On the proof of positive mass conjecture in general relativity}, Comm. Math. Phys. \textbf{65}: 45 - 76. (1979)

\bibitem{Schoen-Yau_2} Schoen, R.-M. and Yau, S.-T.: \textit{Proof of the positive mass theorem II}, Comm. Math. Phys. \textbf{79}: 231 - 260. (1981)

\bibitem{Schoen-Yau_3} Schoen, R.-M. and Yau, S.-T.: \textit{Positive scalar curvature and minimal hypersurface singularities},  arXiv:1704.05490v1. (2017)

\bibitem{Witten} Witten, E.: \textit{A new proof of the positive energy theorem}, Comm. Math. Phys. \textbf{80}: 381 - 402. (1981)

\bibitem{Yuan_1} Yuan, W.: \textit{Volume comparison with respect to scalar curvature}, arXiv:1609.08849. (2016)

\bibitem{Yuan_2} Yuan, W.: \textit{A sharp inequality for first eigenfunctions with respect to scalar curvature}, preprint. (2017)

\end{thebibliography}

\end{document}